\title{ Equivariant Principal Bundles over the 2-Sphere}
\author{ Eyup Yalcinkaya}
\date{\today}
\newcommand{\be}{\begin{equation}}
\newcommand{\ee}{\end{equation}}
\newtheorem{theorem}{Theorem}[section]
\newtheorem{proposition}[theorem]{Proposition}
\newtheorem{corollary}[theorem]{Corollary}
\newtheorem{definition}[theorem]{Definition}
\newtheorem{remark}[theorem]{Remark}
\newtheorem{example}[theorem]{Example}
\def\ni{\noindent}
\begin{document}

\maketitle

\begin{abstract}

We classify $\Gamma$-equivariant principal $G$ bundles over $S^2,$ where $G$ is a compact connected Lie group and $\Gamma \subset SO(3)$ is finite. Restricted over the 1-skeleton of a carefully constructed $\Gamma$-equivariant CW decomposition, the bundle is determined by its restriction to singular points. The extension over $S^2$ is characterized by a Chern class. 
\vspace{.05in}

\ni {\em Keywords:} Equivariant principal bundles, Isotropy representation, Split bundles.

\vspace{.05in}

\ni {\em Mathematics Subject Classification 2010.} 
Primary 55R91, 55R15; Secondary 22A22.

\end{abstract}

\vspace{.05in}

\section{Introduction}

\indent Let $\Gamma$ and $G$ be Lie groups. A $\Gamma$-equivariant principal $(\Gamma, G)$-bundle $\xi$ over $X$ is locally trivial, principal $G$-bundle  $p: E \rightarrow X$ such that $E$ and $X$ are left $\Gamma$-spaces.  We denote the bundle $\xi=(E,X,p,G,\Gamma).$  The projection map $p$
is $\Gamma$-equivariant and $\gamma(e\cdot g) = (\gamma e)\cdot g$ 
where $\gamma \in \Gamma$ and $g \in G$ acting on $e \in E$. Equivariant principal bundles and their generalizations were studied by T. E. Stewart \cite {stewart}, T. tom Dieck \cite{tom}, R. Lashof \cite{lashof1} together with P. May \cite {lashof2} and G. Segal \cite{lashof4}.  These authors use homotopy-theoretic methods. There exists a classifying space $B(\Gamma, G)$ for principal $(\Gamma, G)$-bundles \cite{tom}, so  principal $(\Gamma, G)$-bundles over a $\Gamma$ space $X$ are studied by means of $[X,B(\Gamma,G)]_\Gamma$. If the structure group $G$ of the bundle is abelian, the main result of \cite{lashof4} states that equivariant bundles over a $\Gamma$-space $X$ are classified by ordinary homotopy classes of maps $[X\times_{\Gamma}E\Gamma, BG]$. Recently  M. K. Kim \cite{kim} has classified  equivariant complex bundles over $2$-sphere by means of CW decompositions of linear actions of $S^2$. Moreover, J. H. Verrette \cite{verrette2016results} studies  equivariant  vector  bundles  over the 2-sphere with effective actions by the rotational symmetries of the tetrahedron,  octahedron and  icosahedron for verifying Algebraic Realization Conjecture. 
\vspace{.01in}

\noindent Another approach to equivariant principal bundles is given by Hambleton and Hausmann \cite{ian1}. This approach proceeds the local invariants arising from isotropy representations at singular points of $(X,\Gamma)$.  By an isotropy representation at a $\Gamma$ fixed point $x_0 \in X,$ we imply the homomorphism $\alpha_{x_0} : \Gamma_{x_0} \rightarrow G$ defined by the formula 
$$ \gamma \cdot e_0 = \alpha(\gamma) \cdot e_0   $$
where $e_0 \in p^{-1}(x_0).$ 
Denote the collection of isotropy representation of the bundle $\xi$ by $Rep^G(\mathcal{I}).$ The homomorphism $\alpha$ is independent of the choice of $e_0$ up to conjugation in $G.$ In general, principal $(\Gamma, G)$-bundles over a $\Gamma$ space $X$ can not be classified by means of their approach however only on specific base spaces (i.e. split $\Gamma$-space).

\vspace{.1in}

\noindent In this article, we principally rely on the paper of Hambleton and Hausmann to determine $\Gamma$-equivariant principal $G$ bundles over $S^2$. It is proved that there exists a bijection between the equivalence classes of split $\Gamma$-equivariant $G$-bundles over $X$ and $Rep^G(\mathcal{I})$, provided that $X$ is a split $\Gamma$-space over $ A$ \cite{ian1}.

\noindent To begin with, the $\Gamma$-equivariant principal $G$-bundles  over split $\Gamma$-space $M$ are determined by its isotropy representation. However, it is obvious that $S^2$ is not a split $\Gamma$-space. On the other hand, 1-skeletons of $S^2$ are split $\Gamma$-space.

\noindent After determining the $\Gamma$-equivariant principal $G$-bundles over 1-skeletons lying on $S^2$, homotopy theoretic methods are next tool to separate $\Gamma$-equivariant principal $G$-bundles over $S^2.$ Let $\mathcal{A}^1 \subset S^2$ be a $\Gamma$-equivariant $CW$-complex, then
$$ \mathcal{A}^1 \xrightarrow{i} S^2 \xrightarrow{j} S^2 \cup C(\mathcal{A}^1) \xrightarrow{k} \Sigma(\mathcal{A}^1) \xrightarrow{\Sigma i} \Sigma(S^2) \xrightarrow{\Sigma j} \Sigma(S^2 \cup C(\mathcal{A}^1)) \rightarrow \cdots $$ is a cofibration sequence where   the cone over $\mathcal{A}^1$ is denoted by $C(\mathcal{A}^1)$  and  suspension of $\mathcal{A}^1$ is denoted by $\Sigma(\mathcal{A}^1)$.

\noindent Compute homotopy classes of maps into space $B(\Gamma, G)$, then the following sequence
$$ [\Sigma(S^2),Y] \xrightarrow{\Sigma i^*} [\Sigma(\mathcal{A}^1),Y] \xrightarrow{k^*} [S^2\cup C(\mathcal{A}^1), Y] \xrightarrow{j^*} [S^2,Y] \xrightarrow{i^*} [\mathcal{A}^1,Y] $$ is a exact sequence of abelian groups arising from homotopy classes of maps into space $B(\Gamma, G)$ since $B(\Gamma, G)$=$Y= \Omega Z$ is a loop space \cite{cost}. Besides, principal $G$-bundles over the 2-sphere are determined by the Steenrod equivalence theorem \cite[Th. 18.5]{steen}. The only invariant is the "Chern class"  $c(\xi) \in [S^2,BG]= \pi_2(BG)=\pi_1(G).$ This paper is devoted to proving the following theorem.


\begin{theorem}Let $\xi=(E,S^2,p,G,\Gamma)$ be a $\Gamma$-equivariant principal $G$-bundle over $S^2$ with a compact connected Abelian Lie group $G$ and $\Gamma\subset SO(3)$ be a finite subgroup acting on $S^2.$ A $\Gamma$-equivariant principal $G$-bundle over $S^2$ is determined by $Rep^G(\mathcal{I})$ and $c(\xi) \in \pi_2(BG).$
\end{theorem}
\begin{corollary} Let $\xi_1$ and $\xi_2$ be equivariant principal $G$-bundles over $S^2$. If $Rep^G(\mathcal{I}_{\xi_1})\cong Rep^G(\mathcal{I}_{\xi_2})$ then $c(\xi_1)\equiv c(\xi_2)$  $mod \ \lvert \Gamma\lvert.$ 
\end{corollary}

\vspace{2in} 

\section{Preliminaries}
\indent In this section, some definitions and
theorems are presented from the book of T.  tom Dieck \cite{tom}.   In this paper, we consider the group action $\Gamma$ as a left action. Let $X$ be a topological space and $\Gamma$ be a topological group acting on $X.$ We consider the isotropy group (stabilizer group) of each point in $X$ and the orbit space of the space $X$ under the group action. For each $x\in X$, the \emph{singular points set} of $X$ is denoted by $Sing(X,\Gamma) = \{ x \in X \lvert \ \Gamma_x \neq 1 \}$  if isotropy subgroup of $x$ is not identity.
An action is called \emph{transitive} if for every $x_1,x_2$ $\in$ $X$ there exists an element $\gamma\in \Gamma$ such that $\gamma x_1$ = $x_2$ and the action is called \emph{free} if for every $x$ $\in$ $X$ the only element of $\Gamma$ (identity) fixes the point $x.$

\vspace{.1in}

\noindent A simplicial $\Gamma$-complex is \emph{regular} if for arbitrary $\gamma_i \in \Gamma$  and two simplices $(v_1, v_2, ... , v_n)$ and $(\gamma_1 \cdot v_1, \gamma_2 \cdot v_2, ... , \gamma_n \cdot v_n)$, there is an element $\gamma\in \Gamma$ such that $\gamma v_i={\gamma_i}{v_i}$ for all i.
In general, simplicial complexes may not be regular. However, the simplicial complex that is not regular can be constructed as a regular by means of a barycentric subdivision.

\vspace{.1in}

\noindent The \emph{Riemann-Hurwitz Formula} is generally the tool of algebraic geometry. In this context, the analogous result of this formula for the graph is the following;

\begin{proposition} \label{RHF}
Let $X$ be a compact connected regular simplicial graph, $\Gamma$ be a finite group and $\Gamma$ $\times$ $X$ $\rightarrow$ $X$ be a regular left transitive group action with the orbit space $X /\Gamma \cong A.$
Then $$\chi(X)= |\Gamma| \chi(A)-\sum_{v_i\in  X}(|\Gamma_{v_i}|-1)$$
\end{proposition}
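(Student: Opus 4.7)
The plan is to compute $\chi(X)$ combinatorially as $V(X) - E(X)$, where $V$ and $E$ denote the number of vertices and edges, and to match this against $|G|\chi(A) - \sum_{v_i \in X}(|G_{v_i}|-1)$ by two separate countings: one for edges, one for vertices. The first ingredient is an edge-counting identity $E(X) = |G|\, E(A)$. Under the regularity assumption, no element of $G$ can swap the two endpoints of any edge, so every edge has trivial $G$-stabilizer, each $G$-orbit of edges has exactly $|G|$ elements, and each such orbit maps bijectively to a distinct edge of the quotient graph $A$.

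The second ingredient is a vertex-counting identity $V(X) = |G|\,V(A) - \sum_{v_i \in X}(|G_{v_i}|-1)$. Orbit-stabilizer gives $|Gv| = |G|/|G_v|$ for every vertex $v$, and summing over a chosen set of representatives $\{\bar v\}$ (one per $G$-orbit, hence in bijection with $V(A)$) yields $V(X) = \sum_{\bar v \in V(A)} |G|/|G_v|$. Regrouping the correction sum by orbits, each orbit of size $|G|/|G_v|$ contributes $(|G|/|G_v|)(|G_v|-1) = |G| - |G|/|G_v|$ to $\sum_{v_i \in X}(|G_{v_i}|-1)$, so summing over the $V(A)$ orbits gives $|G|V(A) - V(X)$, which rearranges to the claimed identity. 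Combining the two countings produces
$$\chi(X) = V(X) - E(X) = |G|V(A) - \sum_{v_i \in X}(|G_{v_i}|-1) - |G|E(A) = |G|\chi(A) - \sum_{v_i \in X}(|G_{v_i}|-1).$$

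The delicate point is the triviality of edge stabilizers; without regularity a group element could interchange the endpoints of an edge, causing its orbit to have only $|G|/2$ representatives and breaking $E(X) = |G|E(A)$. Regularity, which the text recalls can always be arranged after barycentric subdivision, is precisely the hypothesis that forbids this, and once it is secured the rest of the argument is straightforward orbit-stabilizer bookkeeping. Connectedness and compactness of $X$ play only the minor role of ensuring that $A$ is itself a connected finite graph, so that all counts involved are finite integers.
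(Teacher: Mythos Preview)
Your proof is correct and follows essentially the same approach as the paper's own (commented-out) argument: both count edges and vertices separately, using regularity to ensure that edge stabilizers are trivial so that $E(X)=|G|\,E(A)$, and then correct the naive vertex count $|G|\,V(A)$ by the stabilizer contributions summed over all vertices of $X$. Your version is a bit more careful in invoking the orbit--stabilizer theorem explicitly and in pinpointing why regularity matters (it forbids endpoint-swapping of edges), but the underlying strategy is identical.
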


\begin{definition}[Split $\Gamma$-Space] Let $\Gamma$ be a Hausdorff topological group and $A$ be a topological space. A $\Gamma$-space is a topological
space equipped with a continuous left action of $\Gamma$. If the space $X$ is a $\Gamma$-space and $x \in X,$ we denote
$\Gamma_{x}$ as the stabilizer of $x.$
A \emph{ split $\Gamma$-space } over $A$ is denoted by a triple
$(X, \pi, \varphi)$ where
\begin{itemize}
\item $X$ is a $\Gamma$-space.
\item $\pi: X \rightarrow A$ is a continuous surjective map and, for each $a\in A$, the preimage
$\pi^{-1}(a)$ is a single orbit.
\item $\varphi$: A $\rightarrow$ X is a continuous section of $\pi$
\end{itemize}
\end{definition}
\noindent In this definition, we might prefer a \emph{ split $\Gamma$-space }  over the space $A$ by omitting the notation of maps $\pi$ and $\varphi$. Also, the map $\pi$ induces
$\bar{\pi}$: $ X  \slash \Gamma $ $\rightarrow$ $A$ which is a homeomorphism since $\varphi$ provides its continuous inverse and $\Gamma $ is Hausdorff.

\subsection{Split Equivariant Principal Bundles}
\noindent  Let $X$ be a split $\Gamma$-space. The isotropy groupoid and representation of $X$ can be defined arising from the group $\Gamma$ and the orbit space.

\begin{definition}[Isotropy Groupoid] Let (X,$ \pi$,$\varphi$) be a split $\Gamma$-space over the orbit space A. The \textbf{Isotropy groupoid} of $X$ is denoted by
$$\mathcal{I}(X):=\{(\lambda, a)\in \Gamma \times A \lvert \ \lambda \in \Gamma_{\varphi(a)}\} $$
\end{definition}
\noindent The isotropy groupoid of the space $X$ is the subspace of $\Gamma \times A$ such that for each $a\in A$, the space
$\mathcal{I}_a= \mathcal{I} \cap(\Gamma \times \{a\})$ can be written as the form $\mathcal{I}_a' \times \{a\},$ where $\mathcal{I}_a'$
is a closed subgroup of $\Gamma.$ As a short notation, $\mathcal{I}$ is used for the isotropy representation of  split $\Gamma$-space of $X$ with the orbit space $A.$ A groupoid $\mathcal{I}$ is called locally maximal if each point $a \in A$ admits a
neighbourhood $U$ such that u is a subgroup of $\mathcal{I}_a$ for all $\mathcal{I}_u \in U$. 

\indent We denote by $A^{(n)}$ the skelata of $A,$ by $\Omega$ = $\Omega(A)$ the set of cells of A,  by $d(e)$ the dimension of a cell $e$ $\in$ $\Omega$ and $\Omega_n$ = $\{$ $e$ $\in$ $\Omega$ $\lvert$ $d(e)$ = $n$ $\}$  for the CW-complex $A.$ Also, we denote by $e(a)$ $\in$ $\Omega$ the cell $e$ of the smallest dimension such that $a$ $\in$ $A.$
\begin{definition} A $(\Gamma, A)$ groupoid $\mathcal{I}$ is called cellular if it is locally maximal and if $\tilde{\mathcal{I}}_a = \tilde{\mathcal{I}}_b$ when $e(a) = e(b).$
\end{definition}
\begin{definition}[Split Bundle] Let $(X, \pi,\varphi)$ be a split $\Gamma$-space over $A$ with isotropy groupoid $\mathcal{I}$ and let $\xi=(E,X,p,G,\Gamma)$ be a $\Gamma$-equivariant principal $G$-bundle over $X$. Then, the bundle $\xi$ is called a \textbf{split bundle} if $\varphi^*\xi$ is trivial.
\end{definition}

\noindent $\Gamma$-equivariant principal $G$-bundle is a split bundle if the orbit space A is a contractible and paracompact space \cite{ian1}.

\begin{definition}[Isotropy Representation] Let $\xi=(E,X,p,G,\Gamma)$ be a $\Gamma$-equivariant principal $G$-bundle over $X$ and (X,$ \pi$,$\varphi$) be a split $\Gamma$-space over the orbit space $A$ and $G$ be a topological group and $\mathcal{I}$ be a $(\Gamma,A)$ groupoid of $X$. A continuous representation of $\mathcal{I}$ is continuous map
$$\iota : \mathcal{I}\rightarrow G$$ such that the restriction of $\iota$ to each point $a$ $\in$ $A$ is group homomorphism from $\mathcal{I}_a$ to $G$ and denoted by
$\iota_a$: $\mathcal{I}_a$ $\rightarrow$ $G$.

\vspace{.1in}
\noindent A continuous representation of $\iota :\mathcal{I} \rightarrow G$ is called locally maximal if for each point $a \in A$, there exists a neighborhood $U$ such that $\mathcal{I}_u$ is subgroup of $\mathcal{I}_a$ for all $u \in U$ and 
 cellular if $\iota_a = \iota_b$ when $e(a) = e(b).$
Moreover, the isotropy groupoid $\mathcal{I}$ is called weakly locally maximal if there exists a continuous map $g: U \rightarrow G$ such that $\alpha_u (\gamma) = g(u) \alpha_a (\gamma) g(u)^{-1}$ for all $u\in U$ and $\gamma \in \mathcal{I}_u.$
 \end{definition}
\noindent The isotropy representation of  $\mathcal{I}$ is the continuous groupoid representation of $\mathcal{I}$ in $G$. In fact, it is well-defined up to conjugation by $Map(A, G)$.
 The set of conjugacy classes of locally maximal continuous representations of $\mathcal{I}$ can be denoted $$Rep^{G}(\mathcal{I})= Hom(\mathcal{I},G)/Map(A,G).$$

\noindent Let $(X, \pi,\varphi)$ be a split $\Gamma$-space with isotropy groupoid $\mathcal{I}$ and $A$ be an orbit space of $X$ by group action $\Gamma.$ Suppose  $\xi$ be a split $\Gamma$-equivariant principal $G$-bundle over the space $X$ then there exists a continuous lifting $\tilde{\varphi}^*(\xi) : A \rightarrow E$ of $\varphi$ since $\varphi^*(\xi)$ is trivial.  The equation
$$
\gamma \tilde{\varphi} (a) = \tilde{\varphi}(a) \alpha_a(\gamma),$$
(valid for $a \in A$ and $\gamma \in \mathcal{I}_a)$ determines a continuous representation $\alpha_{\xi,\tilde{\varphi}} : \mathcal{I} \rightarrow G$
which does not depend on the choices $\tilde{\varphi}$ and depends only on the $\Gamma$-equivariant isomorphism class of $\xi$ \cite{ian1}.

\vspace{0.1in}
\noindent In this case, the class of $\Gamma$-equivariant principal $G$-bundles over Split $\Gamma$-space $X$ is denoted by $SBun_\Gamma^G.$ Then, the following theorem states the bijection between this class and isotropy representation of $\mathcal{I}.$
\begin{theorem} \cite{ian1} \label{class}
Let $(X,\pi,\varphi)$ be a split $\Gamma$-space over the orbit space $A$ with the isotropy groupoid $\mathcal{I}$ of $X$. Assume that $A$ is locally compact, the group $\Gamma$ is a compact Lie group, and $\mathcal{I}$ is locally maximal. Then for any compact connected Lie group G, the map
$$ \Phi: SBun_{\Gamma}^{G} \rightarrow Rep^{G}(\mathcal{I})$$ is a bijection.
\end{theorem}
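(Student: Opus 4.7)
The plan is to prove bijectivity of $\Phi$ by constructing an explicit inverse $\Psi : Rep^{G}(\mathfrak{I}) \to SBun_{\Gamma}^{G}$ and verifying that the two constructions are mutually inverse on isomorphism classes. First I would confirm that $\Phi$ is well-defined on $SBun_{\Gamma}^{G}$. Given a split bundle $\xi = (p : E \to X)$, triviality of $\varphi^{*}(\xi)$ produces a lift $\tilde{\varphi} : A \to E$, and the relation $\gamma \tilde{\varphi}(a) = \tilde{\varphi}(a) \alpha_{a}(\gamma)$ determines $\alpha_{\xi}$ uniquely because $G$ acts freely on fibers; continuity follows from continuity of $\tilde{\varphi}$ together with local triviality of $\xi$. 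A change of lift replaces $\tilde{\varphi}$ by $\tilde{\varphi} \cdot h$ for some continuous $h : A \to G$, which conjugates $\alpha_{\xi}$ pointwise, and a $\Gamma$-equivariant bundle isomorphism has the same effect. Hence $[\alpha_{\xi}] \in Rep^{G}(\mathfrak{I})$ depends only on the equivalence class of $\xi$.

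Second, I would construct $\Psi$ by hand. Given a locally maximal representation $\iota : \mathfrak{I} \to G$, set
\[
E_{\iota} := (\Gamma \times A \times G)/\!\sim, \qquad (\gamma \lambda,\, a,\, g) \sim (\gamma,\, a,\, \iota_{a}(\lambda)\, g)
\]
for $\lambda \in \mathfrak{I}_{a}'$. The left $\Gamma$-action on the first factor and the right $G$-action on the third descend to well-defined actions, and $[\gamma, a, g] \mapsto [\gamma, a] \in X$ is $\Gamma$-equivariant with principal $G$-fibers; the section $\tilde{\varphi}(a) := [1_{\Gamma}, a, 1_{G}]$ lifts $\varphi$ and trivializes $\varphi^{*}(E_{\iota})$. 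The key technical point here is local triviality of $E_{\iota} \to X$: near $x_{0} = \varphi(a_{0})$ I would use the slice theorem (Theorem \ref{thm: slice}) to write a neighborhood as $\Gamma \times_{\mathfrak{I}_{a_{0}}'} S$ for a slice $S$, shrink $S$ using local maximality so that $\mathfrak{I}_{u}' \subset \mathfrak{I}_{a_{0}}'$ for all $u \in S$, and then use the single homomorphism $\iota_{a_{0}}$ restricted along inclusions to produce an explicit trivialization.

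Third, I would verify that $\Phi$ and $\Psi$ are mutually inverse. The identity $\Phi \circ \Psi = \text{id}$ is immediate from the construction of $\tilde{\varphi}$ on $E_{\iota}$. For the reverse direction, given $\xi$ with isotropy representation $\alpha := \alpha_{\xi}$, define $F : E_{\alpha} \to E$ by $F([\gamma, a, g]) := \gamma \tilde{\varphi}(a) \cdot g$; the defining relation for $E_{\alpha}$ matches exactly the identity $\gamma \lambda \tilde{\varphi}(a) g = \gamma \tilde{\varphi}(a) \alpha_{a}(\lambda) g$, so $F$ is well-defined, continuous, $(\Gamma \times G)$-equivariant, and covers the identity of $X$; bijectivity follows from $\Gamma \cdot \varphi(A) = X$ together with principality of the $G$-action.

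The main obstacle lies in the surjectivity step, specifically in proving local triviality of $E_{\iota}$. This is where every hypothesis of the theorem must be deployed: compactness of $\Gamma$ and local compactness of $A$ enter through the slice theorem, while local maximality of $\mathfrak{I}$ provides the crucial monotonicity of isotropy groups on small neighborhoods that allows a single homomorphism $\iota_{a_{0}}$ to govern the trivialization over an entire slice. Once this local triviality is established, both the well-definedness of $\Phi$ and the verification of the inverse relations reduce to routine manipulations with equivariant trivializations.
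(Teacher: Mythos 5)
This theorem is imported from \cite{ian1} and the paper itself gives no proof of it, so there is nothing internal to compare against; your reconstruction --- well-definedness of $\Phi$ via freeness of the $G$-action on fibers, the induced-bundle construction $(\Gamma\times A\times G)/\!\sim$ as an explicit inverse, and local triviality established through the slice theorem combined with local maximality --- is precisely the strategy of Hambleton and Hausmann's original argument. The one point deserving a little more care is the descent of the tautological section $[\gamma,u,1_G]$ to a genuine local section over $X$ near an orbit, which additionally needs a local section of $\Gamma\to\Gamma/\mathfrak{I}_{a_0}'$ (available because $\Gamma$ is a compact Lie group); since this is exactly where you say the hypotheses must be deployed, the sketch is sound.
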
 

\noindent Specifically, if the orbit space is contractible, equivariant bundles turn out split. Except for this case, the following  proposition is another result for equivariant bundles provided that the structural group $G$ of the bundle is abelian.
 
\begin{proposition} \label{prop} \cite{ian1} Let $\Gamma$ be a compact Lie group and let 
$(X,\pi,\varphi)$ be a split $\Gamma$-space over the orbit space $A$ with the isotropy groupoid $\mathcal{I}.$
Suppose that $\mathcal{I}$ is locally maximal and that $A$ is a locally compact space. If the group $G$ is a compact connected abelian group, then there exists an isomorphism between abelian groups
$$ (\Phi,\varphi*) : Bun^G_\Gamma(X) \rightarrow Rep^G(\mathcal{I}) \times Bun^G(A).$$
\end{proposition}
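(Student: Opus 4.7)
The plan is to realize $Bun^G_\Gamma(X)$ as an internal direct product $SBun^G_\Gamma(X) \times Bun^G(A)$ and then invoke Theorem \ref{class} to identify the split factor with $Rep^G(\mathfrak{I})$. Since $G$ is compact abelian, the fiberwise group operation endows $Bun^G_\Gamma(X)$, $Bun^G(A)$, and $Rep^G(\mathfrak{I})$ with compatible abelian group structures (tensor product of principal $G$-bundles on the bundle side, pointwise multiplication on the representation side); because conjugation in $G$ is trivial, conjugacy classes of continuous representations coincide with the representations themselves.

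The first key step is to exhibit a splitting of the pullback homomorphism $\varphi^{*}:Bun^G_\Gamma(X)\to Bun^G(A)$. The projection $\pi:X\to A$ is $\Gamma$-invariant, so pulling back a $G$-bundle $\eta$ on $A$ along $\pi$ produces a $\Gamma$-equivariant $G$-bundle $\pi^{*}\eta$ on $X$ (with $\Gamma$ acting trivially on fibers along each orbit). This defines a group homomorphism $\pi^{*}:Bun^G(A)\to Bun^G_\Gamma(X)$, and since $\pi\circ\varphi=\operatorname{id}_A$ we obtain $\varphi^{*}\pi^{*}=\operatorname{id}$, so that $\pi^{*}$ is a section of $\varphi^{*}$.

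For any $\xi\in Bun^G_\Gamma(X)$, the factorization
\[
\xi \;=\; \bigl(\xi\otimes \pi^{*}\varphi^{*}\xi^{-1}\bigr)\otimes \pi^{*}\varphi^{*}\xi
\]
shows that $\xi\otimes \pi^{*}\varphi^{*}\xi^{-1}$ has trivial pullback under $\varphi^{*}$ and is therefore a split bundle. Hence $\varphi^{*}$ is split surjective with kernel $SBun^G_\Gamma(X)$, giving the group isomorphism
\[
Bun^G_\Gamma(X) \;\cong\; SBun^G_\Gamma(X)\times Bun^G(A).
\]

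The main obstacle will be verifying that the bijection $\Phi$ of Theorem \ref{class} is an abelian group isomorphism and not merely a bijection of sets. For this I would unpack the defining relation $\gamma\,\tilde{\varphi}(a)=\tilde{\varphi}(a)\,\alpha_a(\gamma)$: given split bundles $\xi_1,\xi_2$ with liftings $\tilde{\varphi}_1,\tilde{\varphi}_2$, the fiberwise product $\tilde{\varphi}_1\cdot\tilde{\varphi}_2$ is a lifting of $\xi_1\otimes\xi_2$, and using commutativity of $G$ to rearrange factors on both sides of the defining relation shows directly that $\alpha_{\xi_1\otimes\xi_2}=\alpha_{\xi_1}\cdot\alpha_{\xi_2}$. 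Combining this multiplicativity with the decomposition above yields the claimed isomorphism $(\Phi,\varphi^{*})$.
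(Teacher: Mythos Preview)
The paper does not supply its own proof of this proposition: it is stated with a citation to \cite{ian1} and used as a black box, so there is no argument in the paper to compare against. Your reconstruction is the standard one and is correct: exploit the abelian group structure on principal $G$-bundles coming from fiberwise multiplication, use $\pi^{*}$ as a group-theoretic section of $\varphi^{*}$ to split off $Bun^G(A)$, identify the kernel with $SBun^G_\Gamma(X)$ by the very definition of split bundle, and then invoke Theorem~\ref{class}.

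One small point worth tightening: the map in the statement is written as $(\Phi,\varphi^{*})$ with $\Phi$ applied to an arbitrary (not necessarily split) bundle, whereas in the paper $\Phi$ is a priori only defined on $SBun^G_\Gamma(X)$. Your decomposition effectively sends $\xi\mapsto \Phi(\xi\otimes\pi^{*}\varphi^{*}\xi^{-1})$, and you should remark that this agrees with the direct definition of the isotropy representation of $\xi$: when $G$ is abelian, the formula $\gamma\cdot e=e\cdot\alpha_a(\gamma)$ for $e\in p^{-1}(\varphi(a))$ determines $\alpha_a$ independently of the choice of $e$ (conjugation is trivial), so $\Phi$ extends to all of $Bun^G_\Gamma(X)$, and $\Phi(\pi^{*}\eta)$ is the trivial representation since $\Gamma$ acts trivially on the fibers of $\pi^{*}\eta$. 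With that observation your two descriptions of the first component coincide and the proof is complete.
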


\section{Equivariant Principal $G$-bundles}

\subsection{1-skeletons on $S^2$} \label{section:a}

\noindent The 2-sphere is not a split $\Gamma$-space. On the other hand, 1-skeletons lying on  the 2-sphere can be constructed as split $\Gamma$-space providing that 1-skeletons are regular simplicial $\Gamma$-complex.
 
  \vspace{.05in}

\begin{theorem}\cite{rees2005notes}  Let $\Gamma$ be a finite subgroup of $SO(3)$. Then $\Gamma$ is isomorphic to precisely one of
the following groups: \begin{enumerate}[i)]

\item $\mathbb{Z}_n,$  $(n \geq 1)$: rotational symmetry group of an n-pyramid
\item $D_n$, $(n \geq 2)$: rotational symmetry group of an n-prism
\item $A_4$: rotational symmetry group of a regular tetrahedron
\item $S_4$: rotational symmetry group of a cube (or a regular octahedron)
\item $A_5$: rotational symmetry group of a regular dodecahedron (or a regular icosahedron).
\end{enumerate}
\end{theorem}
 \indent For each finite subgroup of $SO(3)$, we determine $\Gamma$-equivariant 1-skeleton $\mathcal{A}^1 \subset S^2$ and the orbit space $\mathcal{A}^1/ \Gamma\simeq A. $

\begin{theorem}\label{splittheorem}
 Let $\Gamma \subset SO(3)$ be a finite subgroup. Then, for each subgroup $ \Gamma$, there exists a $\Gamma$-equivariant 1-skeleton $\mathcal{A}^1 \subset S^2$ such that the 1-skeleton $\mathcal{A}^1$ is a split $\Gamma$-space over the orbit space A.
\end{theorem} 
\begin{proof}
\noindent For each case, $\Gamma$-equivariant 1-skeletons can be constructed on the $S^2$ such that the Riemannian-Hurwitz formula is satisfied. 1- skeletons for cyclic and dihedral cases are inductively constructed. By this induction, it can be shown that each group $\Gamma$ acting on 1-skeletons yields fixed orbit spaces. For the other subgroups of the $SO(3)$, it can be shown by direct computation for special $\Gamma$-equivariant CW complex $\mathcal{A}^1$. Given any subgroup $\Gamma\subset SO(3)$, 1-skeleton $\mathcal{A}^1$ and its orbit space $\mathcal{A}^1/\Gamma \cong A $ must satisfy Riemann-Hurwitz formula. Then, the 1-skeleton $\mathcal{A}^1$ satisfies the condition of being split $\Gamma$-space.

\begin{enumerate}[(i)]

 \item Cyclic Subgroups

Let $\mathfrak{C_n}$ be a 1-skeleton lying on $ S^2$ such that it contains $2$ vertices at north and south poles and $n$ edges longitudinal semicircles through the points                     $$(\cos(2\pi k/ n), \sin(2\pi k/n))$$ joining the poles
for $0 \leq k < n$ and $ \mathbb{Z}_n $ be a cyclic group with n elements acting on $\mathfrak{C_n}.$

\vspace{0.1in} 
Let $\mathbb{Z}_n$ be the cyclic group acting on $\mathfrak{C_n}$, with the orbit space $E_n \cong \mathfrak{C_n}/\mathbb{Z}_n.$ Hence, the CW-complex $\mathfrak{C_n}$ becomes a split $\mathbb Z_n$-space. Then, the Riemann-Hurwitz Formula must be satisfied for the $E_n \cong \mathfrak{C_n}/ \mathbb{Z}_n$; 
\begin{displaymath} \chi( \mathfrak{C_n})=n\chi(E_n)-\sum_{p\in \mathfrak{C_n}}(|\mathbb {Z}_{n_{p}}|-1)
\end{displaymath}
 where $n_p$ is the order of the isotropy subgroup of the point $p.$ Therefore Euler Characteristic;
$$ \chi( \mathfrak{C_n})=2-n\ \text{and}  \ \chi(E_n)=1.
$$
We claim that
$$ \chi( \mathfrak {C_{n+1}})=\chi( \mathfrak{C_n})-1 \ \text{and} \ \chi(E_{n+1})=\chi(E_n).
$$

\noindent After attaching one new edge to the $\mathfrak{C_n}$, it turns out to be $\mathfrak{C_{n+1}}.$ Conversely, the orbit space $E_{n+1}$ stays the same as the $E_n$.

Let $n=2$, $\Gamma=\mathbb{Z}_2$, then $\chi(\emph{ $\mathfrak{C_2}$})=0$, $\chi(E_2)=1,$ and the Riemann-Hurwitz Formula holds.\\
Suppose $\chi(\mathfrak{C_{n+1}})=1-n$ since the CW-complex  $\mathfrak{C_{n+1}}$ has 2 vertices and
$n+1$ edges. Therefore, $\chi(\mathbf{\mathfrak{C_{n+1}}})=\chi(\mathbf{ \mathfrak{C_n}})-1$, similarly $\mathbf{ \mathfrak{C_{n+1}}} /\mathbb{Z}_{n+1}\cong E_{n+1}$ satisfies the
Riemann-Hurwitz Formula.

\begin{figure}[H]
\centering

\includegraphics[height=20mm, width=100mm,]{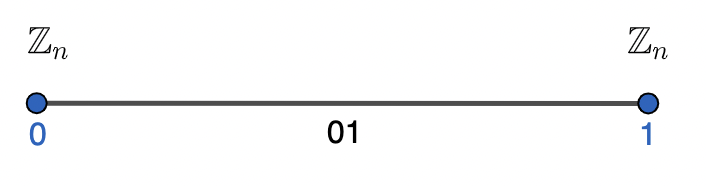}

\caption{Orbit spaces of cyclic groups}
\end{figure}
Let $\mathcal{I}$ denote the isotropy groupoid of the CW-complex $\mathfrak{C_n}.$  It is cellular since it is defined by $\mathcal{I}_0$=$\mathcal{I}_1$=$ \mathbb{Z}_n $ and $\mathcal{I}_{01} =id$. Hence, it can be concluded that there is a continuous section from $E_n$ to $\mathfrak{C_n}.$

 \item Dihedral Subgroups

Let $\mathfrak D_n$ be a 1-skeleton lying on $S^2$ with $2n+2$ vertices and $6n$ edges. The vertices of $\mathfrak D_n$ are as follows:
\begin{itemize}
\item The vertices of the $n$-gon on the equator
\item The middle points of the edges of the $n$-gon
\item The south and north poles.
\end{itemize}
In this case, the 1-skeleton turns out to be a regular simplex. The 1-skeleton contains $6n$ edges: longitudinal quarter circles through the points
$$(\cos(\pi k/ n), \sin(\pi k/n))$$ joining the poles for $0 \leq k < n$ and transversal edges on the equator at $\cos(\pi k/n), \sin (\pi k/ n)$.

\vspace{0.1in}

\noindent Let  $D_n$ be the dihedral group acting on $\mathfrak D_n$ with the orbit space $\mathcal D_n\cong  \mathfrak D_n/ D_n.$
Hence, the CW-complex $\mathfrak D_n$ becomes a split $D_n$-space. Inductively, we show that the orbit space $\mathcal D_n$ and the CW-complex $\mathfrak D_n$ satisfy the  Riemann-Hurwitz Formula;
\begin{displaymath} \chi(\mathfrak D_n)=2n\chi(\mathcal D_n)-\sum_{p\in \mathfrak D_n}(|D_{{2n}_p}|-1)
\end{displaymath} 
where $n_p$ is the order of isotropy subgroup of the point $p$ and hence Euler Characteristic;
\begin{displaymath} \chi(\mathfrak D_n)=(2n+2)-6n=2-4n \ \text{and}\ \chi(\mathcal D_n)=0.
\end{displaymath}
\noindent Attaching a vertex to the CW-complex $\mathfrak D_n$ yields the new CW-complex $\mathfrak D_{n+1}$. Then, there is a relation between the Euler characteristics of these CW-complexes and their orbit spaces as follows;
\begin{displaymath} \chi(\mathfrak D_{n+1})=\chi(\mathfrak D_n)-4 \  and \  \chi(\mathcal D_{n+1})=\chi(\mathcal D_n).
\end{displaymath}

\noindent Let $n=4$, $\Gamma=D_4$ and $\mathfrak D_4$ be a 1-skeleton lying on $S^2$ such that it has a square stating on the equator of
$S^2$.
\indent Let the vertices of the square be labeled 1, 2, 3, 4. \newline
$$\Gamma=D_4= \{ (),(1234),(1423),(1432),(14)(23),(12)(34),(13),(24) \} $$
then \begin{displaymath} \chi(\mathfrak D_4)=-6 \ \ \ \ \ \ \chi(\mathcal D_4)=0 \end{displaymath}
and \begin{displaymath} \chi(\mathfrak D_n)=2-4n
\end{displaymath}
\indent $\chi(\mathfrak D_{n+1})=-2-4n$  with $ 4n+6$ vertices and
$8n+8$ edges.
The relation between ewuler characteristics of $\mathfrak D_{n+1}$ and $\mathcal D_{n+1}$ as follows;
\begin{displaymath} \chi(\mathfrak D_{n+1})=\chi(\mathfrak D_n)-4.
\end{displaymath} 

Hence, $\mathfrak D_{n+1}$ and $\mathcal D_{n+1}$ satisfy the
Riemann-Hurwitz Formula.

\begin{figure}[H]
\centering

\includegraphics[height=40mm, width=80mm,]{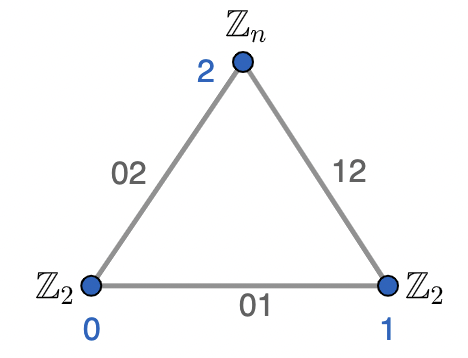}
\caption{Orbit space of the dihedral group}
\end{figure}

The isotropy groupoid $\mathcal{I}$ on the orbit space is calculated as $\mathcal{I}_0$=$\mathcal{I}_1$=$ \mathbb{Z}_2 $,\ $\mathcal{I}_2= \mathbb{Z}_n $ and $\mathcal{I}_{01} = \mathcal{I}_{12}=\mathcal{I}_{02} = id.$ Hence, there exists a continuous section from orbit space to $\mathfrak D_n$.

\item Tetrahedral Subgroup

\noindent Let $\mathfrak T$ be an 1-skeleton tetrahedron lying on $S^2$, 
 It is necessary to add 6 vertices
in the center of edges and 4 vertices in the center of the faces to obtain a regular CW-complex. Hence, $\mathfrak T$ is regular with 14 vertices and 24 edges.

$$\chi(\mathfrak T)=-10$$

Let $A_4$ be the tetrahedral group order 12 acting on $\mathfrak T$ with the orbit space $T\cong \mathfrak T/ A_4$.

Hence, the tetrahedron $\mathfrak T$ has 4 vertex-rotation with the order 3 
and 3 edge-rotation with the order 2. 
Hence, the tetrahedron $\mathfrak T$ is split $A_4$-space since the Riemann-Hurwitz Formula holds as follows;
$$-10=12\chi(T)-\sum_{p\in \mathfrak{T}}(|\mathfrak{T} _p|-1)$$
$$-10=12\chi(T)-(4(2+2)+3(1+1)) $$

$$\chi(T)=1$$

The orbit space $T\cong \mathfrak T/ A_4$ is the following;
\begin{center}
\includegraphics[height=20mm, width=100mm,]{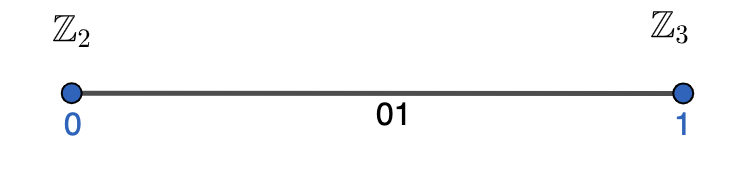}
\end{center}
We can say that the isotropy groupoid $\mathcal{I}$ of $\mathfrak{T}$ is cellular and it is given by $\mathcal{I}_0$=$ \mathbb{Z}_2 $ $\mathcal{I}_1$=$ \mathbb{Z}_3 $ and $\mathcal{I}_{01}$ $=$id. Hence, we can find a continuous section from orbit space to $\mathfrak T.$
\item Octahedral Subgroup

\indent Let $\mathcal C$ be a 1-skeleton cube lying on $S^2.$ 
Firstly, the cube $\mathcal C$ has 8 vertices and 12 edges.
It is necessary extra 12 vertices in the center of edges
and 6 vertices in the center of faces. Hence, the cube $\mathcal C$ is regular with 26 vertices.
$$\chi(\mathcal C)=-22$$

Let $S_4$ be an octahedral group order 24 acting on the cube $\mathcal C$ with the orbit space $O \cong \mathcal C /S_4$.
The cube $\mathcal C$ has 4 vertex-rotation of order 3,
6 edge-rotation of order 2,
and 3 face-rotation of order 4.
Therefore, the orbit space $O$ is composed of 3 vertices and 2 edges. The cube
$\mathcal C$ is a split $S_4$-space since the Riemann-Hurwitz Formula holds as follows;
$$\chi(\mathcal C)=24\chi(O)-\sum_{p\in \mathcal C}(|\mathcal C _p|-1)$$

$$-22=24\chi(O)-(3(3+3)+4(2+2)+6(1+1))$$
$$\chi(O)=1$$
\begin{center}
\includegraphics[height=50mm, width=100mm,]{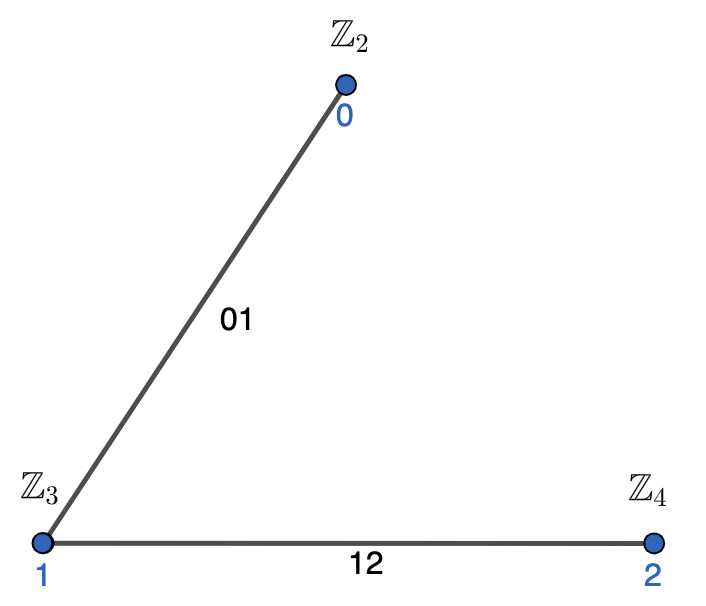}
\end{center}
We can say that the isotropy groupoid $\mathcal{I}$ of the cube $\mathcal {C}$ is  cellular since it is given by $\mathcal{I}_0$=$ \mathbb{Z}_2 $, $\mathcal{I}_1$=$ \mathbb{Z}_3 $, $\mathcal{I}_2$=$ \mathbb{Z}_4 $, $\mathcal{I}_{01}$ $=$id and $\mathcal{I}_{12}$ $=$id. Hence, we can construct a continuous section from orbit space to $\mathcal C.$


\item Icosahedral Subgroup

\indent Let $\mathcal O$ be an 1-skeleton Icosahedron lying on $S^2.$  
Firstly, the Icosahedron $\mathcal O$ has 12 vertices and 30 edges. 
By adding 30 vertices in the center of edges
and 20 vertices in the center of faces, the regular Icosahedron $\mathcal O$ has 62 vertices and 120 edges.
$$\chi( \mathcal O)=-58$$

Let $A_5$ be an icosahedral group order 60 acting on $\mathcal O$ with the orbit space $I\cong \mathcal O / A_5.$
The Icosahedron $\mathcal O$ has 6 vertex-rotation of order 5,
15 edge-rotation of order 2,
and 10 face-rotation of order 3.
Then, the orbit space $I$ is composed of 3 vertices and 2 edges.
Therefore, the Icosahedron $\mathcal I$ is a split $A_5$-space since it satisfies the Riemann-Hurwitz Formula at proposition \ref{RHF}.
$$\chi( \mathcal O)=60\chi(I)-\sum_{p\in \mathcal O}(|\mathcal O _p|-1)$$
$$-22=60\chi(I)-(6(4+4)+15(2+2)+10(1+1))$$
$$\chi(I)=1$$
\begin{center}
\includegraphics[height=50mm, width=100mm,]{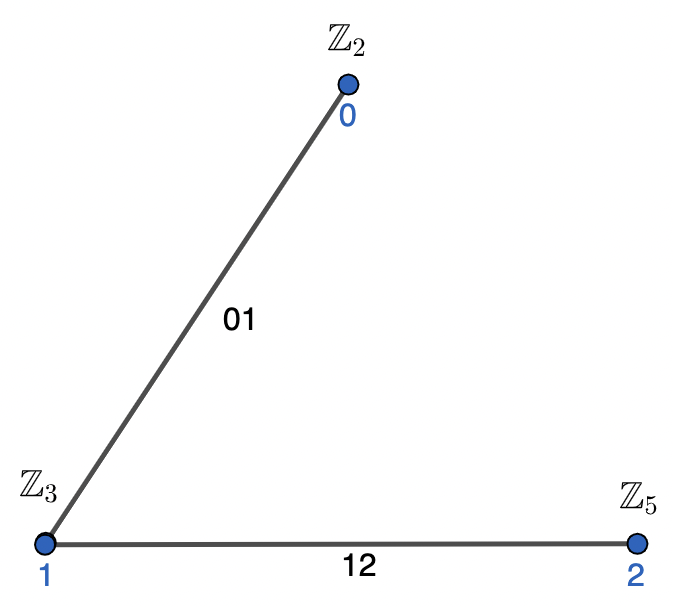}
\end{center}

we can say that the isotropy groupoid $\mathcal{I}$ of the Icosahedron $\mathcal O$ is cellular and it is given by $\mathcal{I}_0= \mathbb{Z}_2 $, $\mathcal{I}_1 = \mathbb{Z}_3 $, $\mathcal{I}_2= \mathbb{Z}_5 $, $\mathcal{I}_{01}=$id and $\mathcal{I}_{12}=$id. Then we can construct a continuous section from orbit space to $\mathcal O.$
\end{enumerate}

\noindent Therefore, for each finite subgroup $\Gamma \subset SO(3)$, we construct $ \Gamma$-equivariant split 1-skeleton lying on $S^2$.

\end{proof}

\indent  Now, we show that for each finite subgroup 
$\Gamma\subset SO(3),$ a $\Gamma$-equivariant 1-skeleton CW-complex $\mathcal{A}^1$ on $S^2$ can be constructed such that the CW-complex $\mathcal{A}^1$ splits over the orbit space $A$.
\subsection{Classification of $G$-bundles Over 1-Skeletons On $S^2$}
\label{section:Classification of Principal $G$-bundle over 1-Skeletons on $S^2$}
\noindent

\noindent The class of isotropy representations, $Rep^G(\mathcal{I})$, can be quite complex. However, by constructing specific regular 1-skeletons of $S^2$ for each subgroup of $SO(3)$, one can obtain a split $\Gamma$-space. The orbit spaces of cyclic groups, the tetrahedral group, the octahedral group, and the icosahedral group are homeomorphic to the interval [-1,1]. Additionally, every $\Gamma$-equivariant principal $G$-bundle can be considered a split $\Gamma$-space, where the orbit space $A$ is both contractible and paracompact. In the case of the dihedral group, the orbit space A is a triangle (i.e. graph), and there exists a bijection between the split bundle space of the CW-complex $\mathcal{A}^1$ and the isotropy representation of groupoid $\mathcal{I}$. When the group $G$ is abelian, there is an isomorphism between the bundle spaces of $\mathcal{A}^1$ and $Rep^G(\mathcal{I})$
$\times$ $Bun^G(A).$

\vspace{.05in} 
\noindent We can deduce that all the orbit spaces are paracompact, as they are all compact. As a result, for any finite subgroup $\Gamma \subset SO(3)$, the orbit spaces can be classified as either contractible or non-contractible. 





\subsubsection{Contractible Case}
\noindent All of the equivariant bundles over the space $\mathcal{A}^1$ are split bundles, as the orbit space of $\mathcal{A}^1$ is both contractible and paracompact. The specific spaces $(*$,$\pi$,$\varphi$ $)$ can be split over their orbit spaces, as stated in Theorem \ref{class}. Additionally, all possible orbit spaces are locally compact, and the possible isotropy groupoids are locally maximal. Since the group $\Gamma$ is a compact Lie group, the map
\begin{equation} \label{ceq} \Phi: SBun_{\Gamma}^{G} \rightarrow Rep^{G}(\mathcal{I}) \end{equation}
is a bijection.
\subsubsection{Non-contractible Case}
In the same way, the CW-complex $\mathfrak D_n$ splits over the orbit space $\mathcal D_n$ and the map \ref{ceq} is a bijection. However, it should be noted that not all equivariant principal $G$ bundles are split over the space $\mathcal{A}^1$. To handle the non-split bundles over the space $\mathcal{A}^1$, we restrict $G$ to be abelian in order to meet the requirement of Proposition \ref{prop}.
The space $(\mathfrak D_n,\pi,\varphi)$ is a split $D_n$-space over $\mathcal D_n$ with the isotropy groupoid $\mathcal{I}$, and $\mathcal D_n$ is locally compact and $\mathcal{I}$ is locally maximal. Therefore, for any abelian Lie group $G$, the following map is an isomorphism;
$$(\Phi,\varphi*) : Bun^G_\Gamma(\mathcal{A}^1) \rightarrow Rep^G(\mathcal{I}) \times Bun^G(A). $$
Since the orbit space $\mathcal D_n$ is a triangle and is homeomorphic to $S^1$, principal $G$-bundles over $S^1$ can be induced by the following map
\begin{equation} \label{deq} \overline{(\Phi,\varphi*)} : Bun^G_\Gamma(\mathcal{A}^1) \xrightarrow{\approx} Rep^G(\mathcal{I}) \times Bun^G(S^1). \end{equation}
In conclusion, for a connected compact Lie group $G$, the following holds:
$$ [S^1,BG] \cong \pi_{1}(BG) \cong \pi_{0}(G) \cong 0. $$
In the case of a disconnected group $G$, the analogy is as follows:
$$ [\mathcal{A}^1,S^1] \cong \pi_{1}(BG) \cong \pi_{0}(G) $$ provided that group $G$ is compact.
\section{Calculation of $Rep^G(\mathcal{I})$}

Let $\iota$ : $\mathcal{I}$ $\rightarrow$ $G$ be an isotropy representation and let $\mathcal{I}$ be a $(\Gamma$,$A)$-groupoid.  For each $e\in \Omega(A),$ this defines Hom($\mathcal{I}_e$, $G)$ with face compatibility conditions $\iota_e$ = $\iota_f$ $\lvert$ $\mathcal{I}_e$ whenever $f$ $\le$ $e.$ The set of conjugacy classes of cellular representations of $\mathcal{I}$ into $G$ is denoted by $Rep^{G}_{cell}$ $(\mathcal{I}).$

\vspace{0.05in}

\indent  For a cellular representation $\iota$ : $\mathcal{I}$ $\rightarrow$ $G$ and a cell $e \in \Omega(A),$ the associated conjugacy class of $\iota$ is denoted by $[\iota_e] \in \overline{Hom} (\mathcal{I},G)$. Hence, the following map $\beta$ is well-defined; 
$$ \beta : Rep_{cell}^G(\mathcal{I}) \rightarrow \prod_ {e \in \Omega (A)} {\overline{Hom}(\mathcal{I}_e, G).}$$
 For each $b_e \in  \prod_ {e \in \Omega (A)} \overline{Hom}(\mathcal{I}_e, G)$, faces compatible to each other. Then, we define 

$$ \overline{ Rep}_{cell}^G(\mathcal{I}) = \{(b_e) \in \prod_ {e \in \Omega (A)} \lvert \  b_e = b_f \lvert \mathcal{I}_e \ \text{if}\ f \le e \}$$ and we can replace $\beta$ as a map $\bar{\beta}$ : $Rep_{cell}^G(\mathcal{I})$ $\rightarrow$ $\overline{Rep}_{cell}^G(\mathcal{I}).$ Then the diagram is commutative,

$$\begin{tikzcd}[column sep=small]
Rep_{cell}^G(\mathcal{I})\arrow{rr}{\tau} \arrow[swap]{dr}{\bar{\beta}}& &Rep^G(\mathcal{I}) \arrow{dl}{\upsilon}\\
& \overline{Rep}_{cell}^G(\mathcal{I}) & 
\end{tikzcd}    $$
when $\mathcal{I}$ is proper $(\Gamma$,$A)$-groupoid.

The map $\tau$ is obvious since a cellular representation is a representation, which is locally
maximal. To define $\upsilon (\beta)e$ for 
$e \in \Omega(A)$, we choose $a \in A$ with $e(a) = e$ and set $\upsilon(\beta)e = [\beta a]$.
Since cells are connected, $\upsilon$ is well defined. On the other hand, none of these maps is either surjective or injective in general.

\begin{theorem} Let $\mathcal{A}^1\subset S^2$ and G be a topological group. Let $\Gamma$ $\subset$ $SO(3),$  $A=\mathcal{A}^1/\Gamma$ be an orbit space and $\mathcal{I}$ be a $(\Gamma$,$A)$-groupoid. Then $\bar{\beta} : Rep_{cell}^G(\mathcal{I}) \rightarrow \overline{Rep}_{cell}^G(\mathcal{I})$
is surjective.
\end{theorem}
\begin{proof} For a finite subgroup $\Gamma\subset SO(3),$ the orbit spaces of subgroups except dihedral group are tree.  Hambleton and Hausmann prove this theorem provided that the orbit space is tree \cite{ian2}. Now, we only prove the case for the dihedral group. Let $b \in \overline{Rep}_{cell}^G(\mathcal{I})$ and let $v$ be a vertex of $A.$ We choose $\iota_v \in Hom(\mathcal{I}_v,G)$ representing $b_v.$ For an edge $e$ between $v$ and $v'$ we define $\iota_e$ $\in$ $Hom(\mathcal{I}_e,G)$ by $\iota_e = \iota_v \lvert  \mathcal{I}_e.$ Since $b \in \overline{Rep}_{cell}^G(\mathcal{I})$, we choose $\iota_{v'} \in Hom(\mathcal{I}_{v'},G)$ where $\iota_{v'}=\iota_e.$ Therefore, we define a cellular representation $\iota_{v,1}$ over the tree $A(v,1)$ of the points of distance smaller than or equal to 1 far from $v.$ We construct $\iota_{v,2}$ over $A(v,2)$ with the same way. Hence, we choose the points of distances smaller than 3 far from $v,$ when $A(v,3)$ is defined. Now, this defines $\iota \in Rep_{cell}^G(\mathcal{I})$ with $\bar{\beta}(\iota) = b.$
\end{proof}
\begin{proposition} \cite{ian1}
Let $\mathcal{I}$ is a $(\Gamma,A)$-groupoid, where $\Gamma$ $\subset$ $SO(3)$ and the orbit space $A$ is a graph. Let $G$ be a path-connected topological group. Then $\upsilon : Rep^G(\mathcal{I})\rightarrow \overline{Rep}_{cell}^G(\mathcal{I})$ is surjective.
\end{proposition}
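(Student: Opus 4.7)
The plan is to deduce surjectivity of $\upsilon$ by a diagram chase on the commutative triangle $\upsilon \circ \tau = \beta$ established just before the preceding theorem, combined with that theorem's surjectivity of $\beta$. Since $\beta$ factors through $\upsilon$, surjectivity of the former forces surjectivity of the latter; the only work is to check the hypotheses of the preceding theorem and that the factorization is genuine at the level of conjugacy classes.

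Concretely, given any $b = (b_e)_{e \in \Omega(A)} \in \overline{Rep}_{cell}^G(\mathfrak{I})$, I would first invoke the previous theorem, whose hypotheses are met since $\Gamma \subset SO(3)$ is finite and the orbit space $A$ is assumed to be a graph (either a tree, handled in \cite{ian2}, or the triangle arising in the dihedral case). This produces a cellular representation $\iota : \mathfrak{I} \to G$ with $\beta(\iota) = b$. I would then apply $\tau$ to obtain a class $\tau([\iota]) \in Rep^G(\mathfrak{I})$, using only that a cellular representation is in particular a locally maximal continuous one. Commutativity of the triangle immediately gives $\upsilon(\tau([\iota])) = \beta([\iota]) = b$, so $\upsilon$ is surjective.

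The path-connectedness hypothesis on $G$ does not enter the chase itself, but it underlies the very definition of the diagram: it is what ensures that the inclusion of cellular representations into arbitrary locally maximal continuous ones is well-behaved with respect to conjugation over edges, so that $\tau$ sends a single conjugacy class to a single conjugacy class and the induced equality $\upsilon \circ \tau = \beta$ holds on $Rep_{cell}^G(\mathfrak{I})$ rather than only on the strict level of homomorphisms. I expect the main obstacle is not the surjectivity argument, which is essentially formal once the triangle is in place, but rather the careful bookkeeping needed to check that $\tau$ is well-defined at the level of conjugacy classes; concretely, for an edge joining $v$ and $v'$, two cellular representatives that agree pointwise up to conjugation must be related by a continuous choice of conjugators along the edge, and this is exactly where the path-connectedness of $G$ is used. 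Once this compatibility is confirmed, the proposition is a one-line corollary of the preceding theorem.
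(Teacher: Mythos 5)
Your argument is correct as a formal diagram chase, but it is genuinely different from what the paper does: the paper's proof of this proposition is a one-line appeal to the Classification Theorem and to \cite{ian1}, i.e.\ it routes through the bundle-theoretic bijection $SBun^G_\Gamma \cong Rep^G(\mathfrak{I})$ and the realization of prescribed local isotropy data by actually constructing equivariant bundles, which is where the path-connectedness of $G$ is genuinely consumed (one needs paths of conjugating elements in $G$ to glue local data along the edges of $A$). Your route instead stays entirely inside the representation-theoretic triangle: since $\upsilon \circ \tau = \beta$ and the preceding theorem gives surjectivity of $\beta$, the image of $\upsilon$ contains the image of $\beta$, which is all of $\overline{Rep}_{cell}^G(\mathfrak{I})$. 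This buys a self-contained, purely formal proof that does not invoke the classification theorem at all, and it is arguably more informative than the paper's citation. Two caveats. First, your diagnosis of where path-connectedness enters is slightly off: $\tau$ is well-defined on conjugacy classes for any $G$, since a cellular representation is in particular a locally maximal continuous one and global conjugacy is the equivalence on both sides; in your argument the path-connectedness hypothesis is simply unused (it would be needed for surjectivity of $\tau$, or for a direct proof of surjectivity of $\upsilon$ not factoring through $\beta$, which is presumably the paper's intent). Second, the paper asserts commutativity of the triangle only for \emph{proper} $(\Gamma,A)$-groupoids, a hypothesis not repeated in the proposition; for the groupoids actually arising from $1$-skeleta of $S^2$ this is satisfied, but you should flag that you are using it.
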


\begin{theorem}
Let $\mathcal{I}$ be a proper $(\Gamma,A)$-groupoid with  $\Gamma \subset SO(3)$ a finite topological group and let $A$ be an orbit space. Let $G$ be a compact connected Lie group. Then $\tau: Rep_{cell}^G(\mathcal{I}) \rightarrow Rep^G(\mathcal{I})$ is bijective. 
\end{theorem}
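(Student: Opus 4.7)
The plan is to exploit the commutative triangle $\upsilon \circ \tau = \beta$ from the earlier diagram, together with the surjectivity of $\beta$ already established, and to prove surjectivity and injectivity of $\tau$ separately by cell-by-cell arguments. The key hypotheses I will use are that $A$ is at most one-dimensional (being the orbit space of the $\Gamma$-invariant 1-skeleton $X \subset S^2$ from Section \ref{section:a}) and that $G$ is compact and connected.

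For surjectivity, I start with a locally maximal continuous $\iota : \mathfrak{I} \to G$ and build a cellular representative in its conjugacy class. On each open cell $e \subset A$, cellularity of $\mathfrak{I}$ gives $\mathfrak{I}_a = \mathfrak{I}_e$ for all $a \in e$, so $\iota$ restricts to a continuous family $a \mapsto \iota_a \in \Hom(\mathfrak{I}_e, G)$. Because $e$ is connected and the $G$-conjugation orbits in $\Hom(\mathfrak{I}_e, G)$ are locally closed, the family stays in a single orbit, namely $G/Z_G(\iota_{v_e}(\mathfrak{I}_e))$ for a basepoint $v_e \in e$. The orbit map $G \to G/Z_G$ is a principal bundle and admits a continuous section over the contractible cell $\bar e$; this yields $g_e : \bar e \to G$ with $g_e(a)^{-1} \iota_a g_e(a) = \iota_{v_e}$. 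Performing this adjustment on each 1-cell and matching values at the 0-skeleton produces a representative of $[\iota]$ that is constant on every cell, i.e. a cellular representation.

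For injectivity, suppose $\iota^1, \iota^2 \in Rep_{cell}^G(\mathfrak{I})$ are equivalent under $\tau$, so $\iota^2 = g^{-1} \iota^1 g$ for some continuous $g : A \to G$. On each cell $e$, cellularity forces $\iota^1_e$ and $\iota^2_e$ to be constant, so $g|_e$ takes values in a single left coset of the centralizer $Z_G(\iota^1_e(\mathfrak{I}_e))$. Using path-connectedness of $G$, the map $g|_e$ can be deformed within this coset to a constant $g_e \in G$ without altering the conjugate representation, and the face-compatibility conditions at the vertices force the collection $\{g_e\}$ to descend to a cellular conjugation identifying $\iota^1$ with $\iota^2$ already in $Rep_{cell}^G(\mathfrak{I})$.

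The main obstacle in both directions is the patching of the cell-wise conjugations — into a globally continuous one for surjectivity, and into a well-defined cellular conjugation for injectivity. This is controlled by the face compatibility $\iota_e = \iota_f|_{\mathfrak{I}_e}$ for $e \le f$, which forces the inclusion $Z_G(\iota_f(\mathfrak{I}_f)) \subseteq Z_G(\iota_e(\mathfrak{I}_e))$, together with the connectedness of $G$ and hence the path-connectedness of the homogeneous spaces $G/Z_G$. Because $\dim A \le 1$, the extension at each 1-cell reduces to lifting a prescribed pair of boundary values in $G/Z_G$ across a contractible interval, which is always possible; with this observation the induction on skeleta collapses and the theorem follows.
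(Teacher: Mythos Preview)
Your argument is essentially the general Hambleton--Hausmann proof from \cite{ian2} and is correct in outline, but it takes a different and heavier route than the paper. For surjectivity, the paper exploits a feature specific to the orbit spaces arising from $\Gamma \subset SO(3)$ acting on the chosen 1-skeletons of $S^2$: the isotropy groupoid $\mathfrak{I}$ is \emph{trivial on every edge} (only the vertices carry nontrivial isotropy, as recorded in the descriptions of $\mathfrak{I}_{01}$, $\mathfrak{I}_{12}$, etc.\ in Section~\ref{section:a}). Consequently, for any continuous $\iota : \mathfrak{I} \to G$ and any point $a$ in an open edge $e$, the map $\iota_a$ is the unique homomorphism $\{1\} \to G$, hence automatically constant along $e$; at a vertex the cellularity condition is vacuous. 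Thus every element of $Rep^G(\mathfrak{I})$ is already cellular and $\tau$ is surjective with no conjugation needed. Your approach, by contrast, would establish surjectivity for an arbitrary 1-dimensional cellular groupoid where the edge isotropy need not be trivial, at the cost of the cell-by-cell conjugation and patching you describe. For injectivity the paper simply defers to \cite{ian2}, so your sketch in fact supplies more detail than the paper does; the two arguments agree in spirit there.
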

\begin{proof}
Let $\tau(\alpha)$ = $\tau(\alpha')$ with the two cellular representations. One can see $\alpha$ = $\alpha'$ by taking the conjugate of the one of them. The map $\tau$ is surjective. Suppose that $a \in Rep^G(\mathcal{I})$, it turns out to be a cellular representation. The reason is that isotropy groups are identity except vertices \cite{ian2}.
\end{proof}
\indent 

\section{Equivariant Bundles on 1-skeletons}

The space of equivariant bundles $Bun^G_\Gamma(\mathcal{A}^1)$ is classified by means of $Rep^G(\mathcal{I})$ with (\ref{ceq}) and (\ref{deq}). $Rep^G(\mathcal{I})$ is calculated for each finite subgroups of $SO(3).$ 
\begin{theorem} Let $\mathfrak{C_n}$ be a $\mathbb{Z}_n $-equivariant 1-skeleton over $S^2$ with 2 vertices and $n$ edges, $\mathbb{Z}_n $ acting on $\mathfrak{C_n}$ be a cyclic group with the order $n.$ Let $E_n$ be the orbit space of  $\mathfrak{C_n}$ under the group action of $ \mathbb{Z}_n $ with isotropy groupoid $\mathcal{I}_1.$ Then, the following map
$$ {Bun^G_{\mathbb{Z}_n}}(\mathfrak{C_n}) \rightarrow Rep^G(\mathcal{I}_1) $$ is a bijection  and $$  Rep^G(\mathcal{I}_1) \cong \overline{Rep}^G(\mathcal{I}_1) \cong \overline{Hom}(\mathbb{Z}_n,G) \times \overline{Hom}(\mathbb{Z}_n,G).$$
\end{theorem}
\begin{proof} $\mathfrak{C_n}$ is a split $\mathbb{Z}_n$-space, all equivariant bundles are split bundles.
\end{proof}
\begin{theorem} Let $\mathfrak D_n$ be a $D_n$-equivariant \emph{1-skeleton} over $ S^2$
with $2n+2$ vertices, $6n$ edges. Let $D_n$ acting on the CW-complex
$\mathfrak D_n$  be a dihedral group with the order $2n.$ Let $\mathcal D_n$ be an orbit space under the group action $D_n$ with isotropy groupoid $\mathcal{I}_5.$ If $G$ is connected, then there is a bijection
$$ Bun^G_{D_n}(\mathfrak D_n) \rightarrow Rep^G(\mathcal{I}_2) $$ and $$ Rep^G(\mathcal{I}_2) \cong \overline{Rep}^G(\mathcal{I}_2) \cong \overline{Hom}(\mathbb{Z}_2,G) \times \overline{Hom}(\mathbb{Z}_2,G) \times \overline{Hom}(\mathbb{Z}_n,G).$$
\end{theorem}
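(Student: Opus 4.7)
The plan is to prove the two bijections separately, using connectivity of $G$ to bypass the obstruction that arises from $\mathcal{D}_n$ not being contractible (unlike the cyclic and polyhedral cases).

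First, I would establish $Bun^G_{D_{2n}}(\mathfrak{D}_n) \cong Rep^G(\mathfrak{I}_2)$. By Theorem~\ref{splittheorem}, $\mathfrak{D}_n$ is a split $D_{2n}$-space over $\mathcal{D}_n$, and as noted in Section~\ref{section:Classification of Principal G-bundle Over 1-Skeletons On $S^2$} the orbit space $\mathcal{D}_n$ is homeomorphic to $S^1$. For any equivariant $G$-bundle $\xi$ the pullback $\varphi^*\xi$ is a $G$-bundle over $\mathcal{D}_n \cong S^1$, and since $G$ is compact and connected
$$
[S^1,BG] \cong \pi_1(BG) \cong \pi_0(G) = 0,
$$
so $\varphi^*\xi$ is trivial and $\xi$ is a split bundle. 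Because $D_{2n}$ is a compact Lie group, $\mathcal{D}_n$ is locally compact, and $\mathfrak{I}_2$ is locally maximal, Theorem~\ref{class} of Hambleton--Hausmann now applies and delivers the stated bijection $\Phi$.

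Second, I would compute $Rep^G(\mathfrak{I}_2)$ explicitly. The orbit space $\mathcal{D}_n$ is a triangle whose three vertices carry isotropy subgroups $C_n$ (the pole orbit, fixed by the order-$n$ rotation), $C_2$ (the polygon-vertex orbit), and $C_2$ (the polygon-edge-midpoint orbit), while each of the three edges of $\mathcal{D}_n$ has trivial isotropy. Since all edge stabilizers are trivial the face-compatibility conditions in the definition of $\overline{Rep}_{cell}^G(\mathfrak{I}_2)$ are automatically satisfied, so
$$
\overline{Rep}_{cell}^G(\mathfrak{I}_2) \;\cong\; \overline{Hom}(C_2,G) \times \overline{Hom}(C_2,G) \times \overline{Hom}(C_n,G).
$$
By the preceding theorem, $\tau: Rep_{cell}^G(\mathfrak{I}_2) \to Rep^G(\mathfrak{I}_2)$ is a bijection for $G$ compact connected Lie, and $\beta: Rep_{cell}^G(\mathfrak{I}_2) \to \overline{Rep}_{cell}^G(\mathfrak{I}_2)$ is surjective by the previous result. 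Injectivity of $\beta$ comes from the double-coset description of its fibers recorded at the end of the preceding section: a fiber over a pair of vertex representations that agree on a trivial edge isotropy is in bijection with $\pi_0(Z_{\kappa_1}) \backslash \pi_0(Z_\kappa) / \pi_0(Z_{\kappa_2})$, and with $\kappa$ the trivial homomorphism one has $Z_\kappa = G$, so $\pi_0(Z_\kappa) = 0$ and the fiber reduces to a point. Composing with $\tau^{-1}$ yields the asserted isomorphism $Rep^G(\mathfrak{I}_2) \cong \overline{Rep}^G(\mathfrak{I}_2)$.

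The main obstacle will be this second step. In the cyclic and polyhedral cases the contractibility of the orbit space made splitness automatic and simplified the comparison between $Rep^G$ and $\overline{Rep}^G$, whereas here the argument rests on controlling $\pi_0$ of the centralizers of the cellular isotropy representations; the connectedness of $G$ is used twice, once to trivialize $Bun^G(S^1)$ and again to collapse the double-coset fibers of $\beta$. Once these connectivity inputs are in place, the product decomposition follows purely from the combinatorics of the triangular orbit space and the triviality of edge stabilizers.
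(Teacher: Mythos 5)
Your argument is correct, but it reaches the first bijection by a genuinely different route than the paper, and it supplies a second half that the paper omits. The paper's own proof is two lines: it cites the splitness of $\mathfrak D_n$ over $\mathcal D_n$, invokes the product decomposition $Bun^G_\Gamma(X) \rightarrow Rep^G(\mathfrak{I}) \times Bun^G(A)$, and kills the second factor via $[S^1,BG]\cong \pi_0(G)=0$. Strictly speaking that decomposition is only stated (in Proposition 3.8) for $G$ a compact \emph{abelian} group, a hypothesis absent from the theorem, so the paper's route has a gap in its hypotheses. You instead use connectivity of $G$ one step earlier: $\varphi^*\xi$ is a $G$-bundle over $\mathcal D_n\cong S^1$, hence trivial, hence every equivariant bundle is split by definition, and Theorem \ref{class} applies directly. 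This buys a proof valid for any compact connected $G$ without passing through the abelian proposition, and is the cleaner deduction. For the second isomorphism the paper gives no argument at all; your computation via the triangular orbit space with vertex isotropies $C_n, C_2, C_2$ and trivial edge isotropies, combined with surjectivity of $\beta$, bijectivity of $\tau$, and the collapse of the double-coset fibers $\pi_0(Z_{\kappa_1})\backslash\pi_0(Z_{\kappa})/\pi_0(Z_{\kappa_2})$ when $\kappa$ is trivial and $G$ is connected, is exactly the missing content (one could equivalently argue that a conjugating function $g$ chosen at the vertices extends over the edges by path-connectivity of $G$). The only caveat is notational: the paper attributes the double-coset description to $\tau^{-1}$ rather than to the fibers of $\beta$, but your usage is the standard one from Hambleton--Hausmann and the substance is unaffected.
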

\begin{proof}
\emph{$\mathfrak D_n$} is split $D_n$-space and $$Bun^G_\Gamma(\mathcal{A}^1) \rightarrow Rep^G(\mathcal{I}_2) \times Bun^G(A)$$ since the group $G$ is a connected compact Lie group, it follows
$$ [S^1,\mathcal{A}^1] \cong \pi_{1}(BG) \cong \pi_{0}(G) \cong 0.$$
\end{proof}
\begin{theorem} Let $\mathfrak{T}$ be a $A_4$-equivariant tetrahedron  and $A_4$ acting on $\mathfrak{T}$ be the tetrahedral group with the order 12. Let $T$ be
an orbit space of $\mathfrak T$ under the group action $A_4$ with isotropy groupoid $\mathcal{I}_3.$ Then, there is a bijection
$$ Bun^G_{A_4}(\mathfrak{T}) \rightarrow Rep^G(\mathcal{I}_3) $$ and $$ Rep^G(\mathcal{I}_3) \cong \overline{Rep}^G(\mathcal{I}_3) \cong \overline{Hom}(\mathbb{Z}_2,G) \times \overline{Hom}(\mathbb{Z}_3,G).$$
\end{theorem}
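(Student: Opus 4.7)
The plan is to mirror the structure of the two preceding theorems and pull together Theorems~\ref{splittheorem}, \ref{class}, and the bijectivity results for $\tau$ and $\beta$ proved in Section~4. First, I would invoke Theorem~\ref{splittheorem}(iii) (the tetrahedral case) to conclude that $\mathfrak{T}$ is a split $A_4$-space over $T$, and recall that the orbit space $T$ is homeomorphic to $[-1,1]$, hence contractible, locally compact, and paracompact. Since $A_4$ is a finite (and in particular compact Lie) group and the isotropy groupoid $\mathfrak{I}_3$ is locally maximal, contractibility of $T$ guarantees that every $A_4$-equivariant principal $G$-bundle over $\mathfrak{T}$ is a split bundle, so Theorem~\ref{class} applies verbatim and yields the first bijection
\[
\Phi \colon Bun^{G}_{A_4}(\mathfrak{T}) \xrightarrow{\cong} Rep^{G}(\mathfrak{I}_3).
\]

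Next, I would identify $\mathfrak{I}_3$ explicitly. The orbit space $T$ has two vertices and one edge; at one endpoint the stabilizer is the order-$3$ subgroup of $A_4$ fixing a vertex of the tetrahedron, at the other endpoint the stabilizer is the order-$2$ subgroup fixing an edge-midpoint, and the interior of the orbit edge has trivial isotropy. Hence the cellular groupoid data are $\mathfrak{I}_{v_1}=C_3$, $\mathfrak{I}_{v_2}=C_2$, and $\mathfrak{I}_e=1$. A cellular representation is therefore nothing but an unrelated pair of homomorphisms $C_2\to G$ and $C_3\to G$, since the face-compatibility relations $\iota_e=\iota_{v_i}\lvert_{\mathfrak{I}_e}$ are automatically satisfied (both sides are trivial). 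Taking conjugacy classes gives
\[
\overline{Rep}^{G}_{cell}(\mathfrak{I}_3)\;\cong\;\overline{Hom}(C_2,G)\times\overline{Hom}(C_3,G).
\]

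Finally, I would assemble the commutative triangle
\[
\begin{tikzcd}[column sep=small]
Rep_{cell}^G(\mathfrak{I}_3)\arrow{rr}{\tau} \arrow[swap]{dr}{\beta}& &Rep^G(\mathfrak{I}_3) \arrow{dl}{\upsilon}\\
& \overline{Rep}_{cell}^G(\mathfrak{I}_3) &
\end{tikzcd}
\]
By Theorem~4.3, $\tau$ is a bijection (using that $G$ is a compact connected Lie group and that $\mathfrak{I}_3$ is nontrivial only on the $0$-skeleton, so every locally maximal representation is automatically cellular). By Theorem~4.1, $\beta$ is surjective, and since $T$ is a tree with only two strata carrying isotropy separated by an edge with trivial stabilizer, distinct cellular representatives with the same image under $\beta$ would have to agree on each vertex up to conjugation already encoded in $\overline{Hom}$, so $\beta$ is in fact bijective. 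Composing gives $\upsilon$ bijective, hence $Rep^{G}(\mathfrak{I}_3)\cong \overline{Rep}^{G}(\mathfrak{I}_3)$, and combined with the previous step we obtain the stated chain of isomorphisms.

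The only nontrivial step I expect is the injectivity of $\beta$ on $Rep^{G}_{cell}(\mathfrak{I}_3)$: one must observe that because the edge of $T$ carries trivial isotropy, no nontrivial conjugation obstruction can arise between the two endpoint homomorphisms, so the conjugacy classes at the two vertices are genuinely independent. Everything else is a direct application of results already in place.
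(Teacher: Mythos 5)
Your proposal is correct and follows essentially the same route as the paper, whose entire proof of this theorem is the one-line observation that $\mathfrak{T}$ is a split $A_4$-space with contractible orbit space so all equivariant bundles are split; you have simply written out in full the chain (split structure, contractibility of $T$, the Hambleton--Hausmann classification theorem, and the $\tau$/$\beta$/$\upsilon$ triangle) that the paper leaves implicit. No substantive difference in approach.
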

\begin{proof}
$\mathfrak{T}$ is a split $A_4$-space, all equivariant bundles are split bundle.
\end{proof}
\begin{theorem} Let $\mathcal{C}$ be a $S_4$-equivariant cube and $S_4$ acting on the cube $\mathcal{C}$ be the octahedral group with the order 24. Let $O$ be the orbit space of $\mathcal{C}$ under the group action $S_4$ with isotropy groupoid $\mathcal{I}_4.$ Then, there is a bijection
$$ Bun^G_{S_4}(\mathcal C) \rightarrow Rep^G(\mathcal{I}_4) $$ and $$ Rep^G(\mathcal{I}_4) \cong \overline{Rep}^G(\mathcal{I}_4) \cong \overline{Hom}(\mathbb{Z}_2,G) \times \overline{Hom}(\mathbb{Z}_3,G) \times \overline{Hom}(\mathbb{Z}_4,G).$$
\end{theorem}
\begin{proof}
\emph{ $\mathcal{C}$} is a split $S_4$-space, all equivariant bundles are split bundle.
\end{proof}
\begin{theorem} Let $\mathcal O$ be an $A_5$-equivariant icosahedron and $A_5$ acting on $\mathcal O$ be an icosahedral group with the order 60. Let $I$
be an orbit space of $\mathcal O$ under the group action $A_5$ with isotropy groupoid $\mathcal{I}_5.$ Then, there is a bijection
$$ Bun^G_{A_5} (\mathcal O) \rightarrow Rep^G(\mathcal{I}_5) $$ and $$ Rep^G(\mathcal{I}_5) \cong \overline{Rep}^G(\mathcal{I}_5) \cong \overline{Hom}(\mathbb{Z}_3,G) \times \overline{Hom}(\mathbb{Z}_4,G) \times \overline{Hom}(\mathbb{Z}_5,G).$$
\end{theorem}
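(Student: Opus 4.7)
The plan is to follow the same template used for the tetrahedral and octahedral cases, since Theorem \ref{splittheorem} has already established that $\mathcal I$ is a split $H$-space over its orbit space $I$ with isotropy groupoid $\mathfrak{I}_5$. The first order of business is to verify the hypotheses of the Classification Theorem \ref{class}: $I$ is a finite regular CW-complex, hence compact and locally compact; $H$ is a finite group and thus a compact Lie group; and $\mathfrak{I}_5$ is locally maximal by direct inspection of the isotropy at each cell. Applying Theorem \ref{class} then gives the bijection
\[
\Phi : Bun^G_H(\mathcal I) = SBun^G_H(\mathcal I) \longrightarrow Rep^G(\mathfrak{I}_5).
\]

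Next I would reduce $Rep^G(\mathfrak{I}_5)$ to $\overline{Rep}^G(\mathfrak{I}_5)$ by stringing together the two maps analyzed in Section~4 of the excerpt. The bijectivity of $\tau : Rep_{cell}^G(\mathfrak{I}_5) \to Rep^G(\mathfrak{I}_5)$ follows from the previous theorem, using that $G$ is compact connected Lie and that the isotropy is concentrated on the 0-cells of $I$ (so every representation is automatically cellular). Surjectivity of $\beta : Rep_{cell}^G(\mathfrak{I}_5) \to \overline{Rep}_{cell}^G(\mathfrak{I}_5)$ is the earlier surjectivity theorem; injectivity is immediate here because $I$ is a tree with only two 1-cells, so there are no global coherence constraints left after the face-compatibility conditions are imposed cell-by-cell. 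Composing these gives the identification $Rep^G(\mathfrak{I}_5) \cong \overline{Rep}^G(\mathfrak{I}_5)$.

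Finally, for the product decomposition, I would read off the cellular structure of $I$ directly from the proof of Theorem \ref{splittheorem} (icosahedral case): $I$ is a path with three 0-cells, whose isotropy groups are the cyclic rotation subgroups listed in the theorem, joined by two 1-cells whose isotropy is trivial. Because the edges contribute trivially, the face-compatibility conditions defining $\overline{Rep}_{cell}^G(\mathfrak{I}_5)$ are vacuous, and the product
\[
\overline{Rep}_{cell}^G(\mathfrak{I}_5) \;\cong\; \overline{Hom}(C_3,G)\times \overline{Hom}(C_4,G)\times \overline{Hom}(C_5,G)
\]
drops out of the definition.

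The main obstacle is not really conceptual but bookkeeping: one needs to keep straight which cyclic stabilizer sits at which vertex of the orbit graph $I$, and to confirm that the mid-edge isotropies really are trivial so that no non-obvious compatibility survives. Once that combinatorial data for $\mathfrak{I}_5$ is fixed, the rest of the argument is formal and parallel to the tetrahedral and octahedral theorems proved just above.
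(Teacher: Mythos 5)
Your proposal follows the same route as the paper: the paper's entire proof of this theorem is the one-line observation that $\mathcal I$ is a split $H$-space so that all equivariant bundles are split, with the bijection coming from Theorem \ref{class} and the identification of $Rep^G(\mathfrak{I}_5)$ with the product left implicit, and your elaboration via the maps $\tau$ and $\beta$ of Section 4 is exactly the intended filling-in of that outline. One concrete correction to the bookkeeping that you yourself flag as the main risk: the icosahedral rotation group ($\cong A_5$) contains no element of order $4$, so the three vertex stabilizers of the orbit path $I$ are $C_2$, $C_3$ and $C_5$ (edge-, face- and vertex-rotations respectively), and the product should read $\overline{Hom}(C_2,G)\times\overline{Hom}(C_3,G)\times\overline{Hom}(C_5,G)$; the $C_4$ factor in the statement, which your proof reproduces uncritically, belongs to the octahedral case. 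Since you claim to read the cellular data of $\mathfrak{I}_5$ off the proof of Theorem \ref{splittheorem} and to confirm which cyclic stabilizer sits at which vertex, this is the one place where your argument, taken literally, would certify a false identification.
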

\begin{proof}
 $\mathcal O$ is a split $A_5$-space, all equivariant bundles are split bundle.
\end{proof}
\section{$\Gamma$-$G$ Bundles over $S^2$}
\noindent Let $\mathcal{A}^1 \subset S^2$ be a $\Gamma$-equivariant 1-skeleton. Hambleton and Hausmann \cite{ian2} provide an isomorphism between $\Gamma$-equivariant principal $G$-bundles over $\mathcal{A}^1$ and the class of isotropy representations. Additionally, it is shown that a $\Gamma$-equivariant 1-skeleton $\mathcal{A}^1$ is a split $\Gamma$-space in Theorem \ref{splittheorem}. On the other hand, we can construct a cofibration sequence derived from the inclusion map $i:\mathcal{A}^1\rightarrow S^2$ to determine $\Gamma$-equivariant principal $G$-bundles over $S^2$, since the 2-sphere is not $\Gamma$-equivariant. \\
Then, the following sequence
\begin{equation}  \mathcal{A}^1 \xrightarrow{i} S^2 \xrightarrow{j} S^2 \cup C(\mathcal{A}^1) \xrightarrow{k} \Sigma(\mathcal{A}^1) \xrightarrow{\Sigma i} \Sigma(S^2) \xrightarrow{\Sigma j} \Sigma(S^2 \cup C(\mathcal{A}^1)) \rightarrow \cdots \end{equation} is the cofibration of $\Gamma$-equivariant $CW$-complexes where a cone $$C(\mathcal{A}^1)= (\mathcal{A}^1\times[0,1])\big{/} \{ (a,0) \sim  \text{single point}\}$$ and  the  suspension $$\Sigma(\mathcal{A}^1)=(\mathcal{A}^1 \times [-1,1]) \big{/}  \{  (a,-1)\sim \text{single point}, (a,1) \sim \text{single point}\}.$$

\indent The $\Gamma$-fixed set of homotopy classes maps into the space $B(\Gamma,G)$, then the following sequence
\begin{equation} \label{eq:solve} [\Sigma(S^2),Y]_\Gamma \xrightarrow{\Sigma i*} [\Sigma(\mathcal{A}^1),Y]_\Gamma \xrightarrow{k^*} [S^2\cup C(\mathcal{A}^1),Y]_\Gamma \xrightarrow{j^*} [S^2,Y]_\Gamma \xrightarrow{i^*} [\mathcal{A}^1,Y]_\Gamma \end{equation}is the exact sequence of abelian groups provided that $ B(\Gamma,G)= Y= \Omega Z$ is a loop space which is defined by Costenoble and Waner \cite{cost}. Now, $[S^2,Y]$ is determined by $j^*$ and $i^*.$

\vspace{0.1in}
\noindent Topologically, we have homomorphisms  such that  $S^2\cup C(\mathcal{A}^1) \simeq \bigvee S^2$ (induced from 2-cells) and 
$\Sigma(\mathcal{A}^1) \simeq \bigvee S^2$ (induced from 1-cells). 
Then the exact sequence  at (\ref{eq:solve}) turns out to be the following sequence;   
$$ [\bigvee_{1-cells} S^2,Y] \xrightarrow{k^*} [\bigvee_{2-cells} S^2,Y] \xrightarrow{j^*} [S^2,Y] .$$
\begin{equation}  \label {eq:solve1}[\bigvee S^2,Y]_{\Gamma \neq D_n}  \simeq \bigoplus_{1-\chi(\mathcal{A}^1)}\pi_2(BG)\ and\ \Gamma\ acts\ on\ product\ = I\otimes\pi_1(G)\end{equation} as a $\Gamma$-module, or 
\begin{equation}  [\bigvee S^2,Y]_{\Gamma = D_n}  \simeq \bigoplus_{1-\chi(\mathcal{A}^1)}\pi_2(BG)\ and\ \Gamma\ acts\ on\ product\ = (I\oplus \mathbb{Z}\Gamma)\otimes\pi_1(G)\ \end{equation} as a $\Gamma$-module then, we have

\begin{equation} \label {eq:solve2}[\bigvee_{2-cells} S^2,Y]_\Gamma \simeq \bigoplus_\mathcal{N} \pi_2(BG)\ and\ \Gamma\ acts\ on\ product\ = \mathbb{Z}\Gamma\otimes\pi_1(G)\ \end{equation} as a $\Gamma$-module provided that the ideal $I =\mathbb{Z}\{(\gamma-1)\lvert \ \gamma \in \Gamma \} \subset \mathbb{Z}\Gamma.$ The number of copy of $\pi_2(BG)$ at the equation (\ref {eq:solve1}) is calculated by means of counting rotations and order of groups.\\
Let $\mathcal{A}^1 \subset S^2$ be a $\Gamma$-equivariant 1-skeleton. For the cyclic group $\mathbb{Z}_n$, an equivariant 1-skeleton on $S^2$ is constructed by two vertices and $n$ edges. By collapsing an edge to a point, the other edges turn out to be circles. Therefore, we obtain $(n-1)$ circles.\\
\noindent For the dihedral group $D_n$, there is an orbit with 2n elements. By shrinking an edge to a point, we obtain $(2n-1)$ circles. The other 2n orbits with 2 elements are reduced to (2n) circles, by collapsing an edge to a point. Therefore, $(4n-1)$ circles are formed from the group action of the dihedral group.\\
\noindent For the tetrahedral group $A_4$, there are 4 vertex rotations with order 3. By collapsing an edge to a point for each rotation, we obtain 8 circles. There are 3 edge rotations with order 2 that are reduced to 3 circles. In total, we have 11 circles for the tetrahedral group.\\
\noindent For the octahedral group $S_4$, there are 4 vertex rotations with order 3. After collapsing one edge to a point for each rotation, we obtain 8 circles. There are 6 edge rotations with order 2, which yield 6 circles. There are 3 face rotations with order 4, which yield 9 circles. In total, we have 23 circles for the octahedral group.\\
\noindent Finally, for the icosahedral group $A_5$, we count rotations and orders using the same method and obtain 59 circles.\\
\noindent Briefly, we notice that the number of circles for each subgroup $\Gamma$ acting on $\mathcal{A}^1$ is $1-\chi(\mathcal{A}^1)$ for each $\Gamma$-equivariant $\mathcal{A}^1\subset S^2$.\\
\noindent Let $\mathcal{N}$ denote the number of copies of $\pi_2(BG)$ at (\ref{eq:solve2}). It depends on the number of orbits and the order of group $\Gamma$. Except for the dihedral group, finite subgroups of $SO(3)$ have a single orbit. We summarize this in the following table.

\begin{table}[H]
 \centering
\begin{tabular}{l*2|c|r}
Group & $1-\chi(\mathcal{A}^1)$ & $\mathcal{N}$ \\
\hline
Cyclic group & $n-1$ & $n$ \\
Dihedral group & $4n-1$ & $4n$ \\
Tetrahedral group & 11 & 12 \\
Octahedral group & 23 & 24 \\
Icosahedral group & 59 & 60 \\

\end{tabular}
\caption{The number of copy of $\pi_2(BG)$.}

\end{table}

Since $k^*$ is an injective map, depending on $\Gamma$ the following maps hold; 
$$0 \rightarrow I\otimes\pi_1(G) \xrightarrow{k^*} \mathbb{Z}\Gamma\otimes\pi_1(G) \xrightarrow{j^*} \mathbb{Z}\otimes\pi_1(G) $$ or $$0 \rightarrow (I\oplus\mathbb{Z}\Gamma)\otimes\pi_1(G) \xrightarrow{k^*} (\mathbb{Z}\Gamma\oplus\mathbb{Z}\Gamma)\otimes\pi_1(G) \xrightarrow{j^*} \mathbb{Z}\otimes\pi_1(G) $$ 

and
$Coker(k^*) \simeq \mathbb{Z}\otimes\pi_1(G) \simeq [S^2,B(\Gamma,G)]_\Gamma.$
\vspace{.05in}

\noindent Now, let $Z$ and $Y$ be two $\Gamma$-space and $f:Z\rightarrow Y$ be continuous. Define $f^\gamma(z)=\gamma^{-1}f(\gamma z).$ The map $f\rightarrow f^{\gamma}$ gives an action of $\Gamma$ on $[Z,Y]$. Then $f=f^{\gamma}$ $\leftrightarrow$ $\gamma f(z)=f(\gamma z)$ $\leftrightarrow$ $f$ is a $\Gamma$-map. Therefore, we shall say the following $[Z,Y]_\Gamma= Fix(\Gamma,[Z,Y])$ .
\vspace{0.08in}

\noindent $Fix(\Gamma, I)=\{x \in I \lvert \ \gamma x = x\} =0$  and we  determine $Fix(\Gamma, \mathbb{Z}\Gamma ).$   Let  $  t \in \Gamma $ be a generator. $\Gamma$ is acting on  $\mathbb{Z}\Gamma = \mathbb{Z} \oplus \mathbb{Z}t \oplus \cdots \oplus \mathbb{Z}t^{n-1}$  where $\lvert \Gamma \lvert =n$. The fixed set of $\mathbb{Z}\Gamma$ can be determined by $(a_0+a_1t+\cdots +a_{n-1}t^{n-1} )\gamma =(a_0+a_1t+\cdots +a_{n-1}t^{n-1} )$ for all $\gamma \in \Gamma$ implies that fixed elements are $\mathbb{Z}(1+t+\cdots+ t^{n-1})$. Then,  $Fix(\Gamma, \mathbb{Z}\Gamma )= \mathbb{Z}(1\sum_{\gamma \in \Gamma}\gamma).$ Therefore, 
\begin{equation} \label {eq:solve6}[\Sigma (\mathcal{A}^1), Y]_\Gamma = Fix(\Gamma,I\otimes\pi_2Y)=0 \end{equation} and 
\begin{equation} \label {eq:solve7}[S^2\cup C(\mathcal{A}^1),Y]_\Gamma =Fix(\Gamma,\mathbb{Z}\Gamma\otimes \pi_2Y)= \pi_2(Y), \end{equation} since  we have $[S^2,Y]=\pi_2(Y)$ in the sequence (\ref{eq:solve}).   
\begin{theorem} Let $\xi=(E,S^2,p,G,\Gamma)$ be a $\Gamma$-equivariant principal $G$-bundle over $S^2$ with a compact connected Abelian Lie group $G$ and $\Gamma\subset SO(3)$ be a finite subgroup acting on $S^2.$ A $\Gamma$-equivariant principal $G$-bundle over $S^2$ is determined by $Rep^G(\mathcal{I})$ and $c(\xi) \in \pi_2(BG).$
\end{theorem}
\begin{proof} Let $[\nu]$,  $[\xi]$ $\in$ $[S^2,Y]_\Gamma$.  $[S^2,Y]_\Gamma \xrightarrow{i^*} [\mathcal{A}^1,Y]_\Gamma $ and $[\mathcal{A}^1,Y]_\Gamma \cong Rep^G_\Gamma (\mathcal{I}).$ If $Rep^G (\mathcal{I}_\nu ) \ncong Rep^G_\Gamma (\mathcal{I}_\xi)$ then one concludes that they are non-equivariant to each other. If $Rep^G(\mathcal{I}_\nu)\cong Rep^G_\Gamma(\mathcal{I}_\xi)$ then  $$ [\Sigma (\mathcal{A}^1), Y]_\Gamma \rightarrow [S^2\cup C(\mathcal{A}^1),Y]_\Gamma \rightarrow [S^2,Y]_\Gamma$$ and by (\ref{eq:solve6}) and (\ref{eq:solve7}) $$ 0 \rightarrow \pi_2(BG) \xrightarrow{\lvert \Gamma \lvert} [S^2,Y]_\Gamma $$ then this map is reduced to the following congruence.
\end{proof}
\begin{corollary} If $Rep^G(\mathcal{I}_{\xi_1})\cong Rep_\Gamma^G(\mathcal{I}_{\xi_2})$ then $c(\xi_1)\equiv c(\xi_2)$   $mod\  \lvert \Gamma\lvert.$ 
\end{corollary}

\noindent This theorem completes the classification of
equivariant principal bundles over the 2-sphere.  Future studies will be focusing on how we can apply this theorem to the product space $S^2\times S^2$ by these ideas.

\vspace{.05in}

\textbf{Acknowledgements.} I would like to thank I. Hambleton as my supervisor to provide tools of this article. I also would like to thank M. Kalafat for motivating me to publish this article and E. Yalcin for useful discussions.  This work is partially supported by the grant of McMaster University.
\bigskip

\bigskip

{\small 
\begin{flushleft} \textsc{Department of Mathematics \& Statistics, McMaster University, Hamilton, ON L8S 4K1, Canada}\\
\textsc{Current Address: Department of Mathematics, University of Rochester,  NY, USA}\\
E-mail: eyup.yalcinkaya@rochester.edu

\end{flushleft}
}

\end{document}